\documentclass[reqno]{amsart}
\usepackage[foot]{amsaddr}
\usepackage[bottom=3cm, top=3cm, left=3cm, right=3cm]{geometry}
\usepackage{amsmath}
\usepackage{amsthm}
\usepackage{amssymb}
\usepackage{etoolbox}
\usepackage{mathtools}
\usepackage{cases}
\usepackage{empheq}
\usepackage[shortlabels,inline]{enumitem}
\setlist[itemize]{noitemsep,nolistsep}
\setlist[enumerate]{noitemsep,nolistsep}

\usepackage[dvipsnames]{xcolor}
\usepackage{graphicx}
\definecolor{tabblue}{rgb}{0.12156862745098039, 0.4666666666666667, 0.7058823529411765}
\definecolor{taborange}{rgb}{1.0, 0.4980392156862745, 0.054901960784313725}
\definecolor{tabgreen}{rgb}{0.17254901960784313, 0.6274509803921569, 0.17254901960784313}
\definecolor{tabred}{rgb}{0.8392156862745098, 0.15294117647058825, 0.1568627450980392}
\definecolor{tabpurple}{rgb}{0.5803921568627451, 0.403921568627451, 0.7411764705882353}
\definecolor{cblue}{HTML}{2b50aa}
\colorlet{citecolor}{tabgreen}
\colorlet{linkcolor}{cblue!90}
\colorlet{urlcolor}{tabred}

\usepackage{tikz}
\usepackage{tikz-cd}
\usepackage{wrapfig}

\usepackage{hyperref}
\usepackage{url}

\hypersetup{
  breaklinks=true,
  colorlinks=true,
  linkcolor=linkcolor,
  urlcolor=urlcolor,
  citecolor=citecolor,
  bookmarksdepth=3,
  pdftitle={Gauss-Newton drifting extends natural gradient descent},
  pdfauthor={Théo Dumont, Théo Lacombe, François-Xavier Vialard},
  }

\DeclareUnicodeCharacter{0308}{o}
  
\usepackage[backref=true, backrefstyle=three, hyperref=true, maxcitenames=3, maxbibnames=50, style=alphabetic, backend=bibtex]{biblatex}
\cslet{blx@noerroretextools}\empty  
\usepackage[noabbrev,capitalize,nameinlink]{cleveref}
\crefname{equation}{}{}
\Crefname{equation}{Eq.}{}

\usepackage{autonum}
\usepackage{thmtools}
\usepackage{algorithm}
\usepackage{algpseudocodex}
\usepackage{subcaption}

\makeatletter
\renewcommand{\tocsection}[3]{%
  \indentlabel{\@ifnotempty{#2}{\bfseries\ignorespaces#1 #2\quad}}\bfseries#3}
\renewcommand{\tocsubsection}[3]{%
  \indentlabel{\@ifnotempty{#2}{\ignorespaces#1 #2\quad}}#3}
\def\l@subsection{\@tocline{2}{0pt}{2.5pc}{5pc}{}}
\makeatother

\allowdisplaybreaks

\newcounter{counter}
\numberwithin{counter}{section}
\numberwithin{equation}{section}

\newtheorem*{theorem*}{Theorem}

\newtheorem{lemma}[counter]{Lemma}

\newtheorem{proposition}[counter]{Proposition}
\newtheorem*{proposition*}{Proposition}

\theoremstyle{definition}
\newtheorem{remark}[counter]{Remark}
\newtheorem{example}[counter]{Example}
\newtheorem{definition}[counter]{Definition}

\NewCommandCopy{\proofqedsymbol}{\qedsymbol}
\AtBeginEnvironment{proof}{\renewcommand{\qedsymbol}{\proofqedsymbol}}
\AtBeginEnvironment{example}{\pushQED{\qed}\renewcommand{\qedsymbol}{$\diamondsuit$}}
\AtEndEnvironment{example}{\popQED\endexample}
\AtBeginEnvironment{remark}{\pushQED{\qed}\renewcommand{\qedsymbol}{$\triangle$}}
\AtEndEnvironment{remark}{\popQED\endexample}

\makeatletter
\newcommand\definealphabetloop[3]{%
  \ifx\relax#3\expandafter\@gobble\else\expandafter\@firstofone\fi
  {\expandafter\providecommand\expandafter*\csname#1#3\endcsname{#2{#3}}%
   \definealphabetloop{#1}{#2}}%
}%
\newcommand\definealphabet[2]{%
  \definealphabetloop{#1}{#2}abcdefghijklmnopqrstuvwxyzABCDEFGHIJKLMNOPQRSTUVWXYZ\relax
}%
\definealphabet{b}{\mathbb}%
\definealphabet{c}{\mathcal}%
\definealphabet{f}{\mathfrak}%
\definealphabet{r}{\mathrm}%
\definealphabet{s}{\mathscr}%
\definealphabet{t}{\mathtt}%
\definealphabet{bf}{\mathbf}%
\makeatother

\renewcommand{\epsilon}{\varepsilon}
\newcommand{\eps}{\varepsilon}
\renewcommand{\rho}{\varrho}
\usepackage{scalerel}

\newcommand{\W}{\operatorname{W}}
\newcommand{\PRd}{\smash{\cP_2(\bR^d)}}

\newcommand{\id}{{\operatorname{id}}}

\newcommand{\Tan}{\operatorname{Tan}}

\newcommand{\nablaa}{\nabla_{\!{\scriptscriptstyle \W}}}

\DeclareMathOperator*{\argmin}{arg\,min}
\newcommand\dd{\mathop{}\!\mathrm d}

\newcommand{\midd}{\,|\,}

\newcommand{\rhot}{\gamma}
\newcommand{\rhos}{\rho_0}
\newcommand{\Lrhos}{\smash{L^2(\rhos)}}
\renewcommand{\sg}{\text{\texttt{sg}}}
\newcommand{\Ldrift}{\cL_{\text{drift}}}
\newcommand{\LD}{\cL_{D}}

\newcommand{\Sn}{\smash{g^{n}}}
\newcommand{\Snk}[1]{\smash{g^{n}_{#1}}}
\newcommand{\Snj}{\smash{g^{n_j}}}
\newcommand{\T}[1]{\smash{f_{#1}}}
\newcommand{\hTn}{\smash{\widehat f^n}}

\usepackage{fancyhdr}
\title{Learning Monge maps with constrained drifting models}
\author[Dumont]{Théo Dumont$^\dagger$}
\author[Lacombe]{Théo Lacombe$^\dagger$}
\author[Vialard]{François--Xavier Vialard$^\dagger$}
\address{\textnormal{$^\dagger$Laboratoire d'Informatique Gaspard Monge, Université Gustave Eiffel, CNRS, F-77454 Marne-la-Vallée, France.}}
\email{\{\href{mailto:theo.dumont@univ-eiffel.fr}{theo.dumont},\href{mailto:theo.lacombe@univ-eiffel.fr}{theo.lacombe},\href{mailto:francois-xavier.vialard@univ-eiffel.fr}{francois-xavier.vialard}\}@univ-eiffel.fr}
\definecolor{tabblue}{rgb}{0.12156862745098039, 0.4666666666666667, 0.7058823529411765}
\definecolor{taborange}{rgb}{1.0, 0.4980392156862745, 0.054901960784313725}
\definecolor{tabgreen}{rgb}{0.17254901960784313, 0.6274509803921569, 0.17254901960784313}
\definecolor{tabred}{rgb}{0.8392156862745098, 0.15294117647058825, 0.1568627450980392}
\definecolor{tabpurple}{rgb}{0.5803921568627451, 0.403921568627451, 0.7411764705882353}
\definecolor{cblue}{HTML}{2b50aa}
\colorlet{citecolor}{tabgreen}
\colorlet{linkcolor}{cblue!90}
\colorlet{urlcolor}{tabred}
\usepackage{url}
\hypersetup{
  breaklinks=true,
  colorlinks=true,
  linkcolor=linkcolor,
  urlcolor=urlcolor,
  citecolor=citecolor,
  bookmarksdepth=3,
  pdftitle={Gauss-Newton drifting extends natural gradient descent},
  pdfauthor={Théo Dumont, Théo Lacombe, François-Xavier Vialard},
  }
\usepackage[noabbrev,capitalize,nameinlink]{cleveref}
\crefname{equation}{}{}
\Crefname{equation}{Eq.}{}

\title{Gauss--Newton drifting extends natural gradient descent}

\begin{document}

\begin{abstract}
    Drifting methods have recently been introduced as a promising new paradigm for training generative models, and have attracted a lot of attention so far.
    In this work, we focus on how the optimization procedure shapes their convergence properties, shedding light on the underlying machinery that may explain their empirical success.
    More specifically, when the drift field is a Wasserstein gradient, optimizing the drifting loss via plain Euclidean gradient descent seems to forfeit any hope of exploiting convexity and therefore of obtaining any convergence guarantees.
    Changing either the drift field or the optimization procedure, however, may lead to different dynamics: we show that keeping the Wasserstein gradient as drift field but replacing gradient descent with a Gauss--Newton scheme amounts to performing a natural gradient descent in parameter space.
    This connection allows us to provide global convergence guarantees for the optimization procedure under suitable convexity assumptions.

\vspace{2mm}
\noindent\textsc{Keywords.} drifting generative models $\cdot$ Gauss--Newton $\cdot$ natural gradient $\cdot$ optimal transport.
\end{abstract}
\maketitle

\section{Introduction}

Generative models can be framed in the following way: one wants to sample new observations from an target probability distribution $\rhot\in\PRd$, typically only known through (training) samples $x_1,\dots,x_n$.
Given an easy-to-sample source distribution $\rhos\in\PRd$ (say for instance a Gaussian distribution), which we always assume to be \emph{absolutely continuous} in this work, a way to sample from $\rhot$ is to find some map $f:\bR^d\to\bR^d$ that pushes $\rhos$ onto $\rhot$, in the sense that $X \sim \rhos$ implies $f(X) \sim \rhot$, which we denote by $f_* \rhos = \rhot$.
Finding such a map $f$, or at least a map satisfying $f_* \rhos \simeq \rhot$ for a suitable notion of distance between probability distributions, is thus at the core of most approaches underpinning generative modeling.
\smallskip

\noindent\textbf{Diffusion models.}
An inspiring example comes from diffusion models \cite{ho2020denoising}, which rely on the following observation:
if one lets $(\rho_t)_t$ be the density of the diffusion process
\begin{equation}
    \dd X_t = - X_t\dd t + \sqrt{2} \dd B_t,\qquad X_0 \sim \rhot,
\end{equation}
where $\dd B_t$ is a Wiener process, then $(\rho_t)_t$ interpolates between $\rhot$ and $\rhos$ as $t$ goes from $0$ to $+\infty$. This implies that if one consider the \emph{backward} ODE
\begin{equation}
    \dd x(s) = \underbrace{\big[ x(s) + \nabla \log(\rho_s)(x(s)) \big]}_{\eqqcolon\, v(x(s),s)} \dd s
\end{equation}
and its \emph{flow map} $\varphi_t : x(0) \mapsto x(0) + \int_0^t v(x(s),s) \dd s$, the map $f \coloneqq \varphi_\infty$ does satisfy $f_* \rhos = \rhot$.
While perfectly sound from a theoretical viewpoint, this approach raises several practical challenges: $(i)$ the \emph{score} function $\nabla \log(\rho_t)$ is unknown and has to be estimated from samples using a parameterization $\theta\mapsto s_\theta(x,t)$ (say, using a neural network); $(ii)$ the backward ODE has to be discretized in time and cannot be initialized at $t = +\infty$ in practice, which yields numerical errors that could accumulate \cite{hurault2025score}; $(iii)$ the inference step, which consists in evaluating the map $f$, is costly: one has to evaluate the model $s_\theta$ at each step of the time-discretization of the flow.
\smallskip

\noindent\textbf{A naive approach.}
Another tempting way to find a map pushing $\rhos$ onto $\rhot$, but this time with a dramatically reduced inference time, is to directly parameterize the map $f$ with some $\phi:\Theta\to\Lrhos$, $\phi:\theta\mapsto f_\theta$, using for instance a neural network of parameter space $\Theta \subset \bR^m$. Then, finding some $f_{\smash{\hat\theta}}$ pushing $\rhos$ onto $\rhot$ is equivalent to finding a minimizer $\smash{\hat\theta}$ of the loss function
\begin{equation}
    \tag{$\LD$}
    \label{eq:loss-D}
    \LD:\Theta\to\bR,\qquad
    \LD(\theta)
    \coloneqq F(f_\theta{}_*\rhos),
\end{equation}
where $F:\rho\mapsto D(\rho\midd\rhot)$ and $D$ is any divergence on $\PRd$ (that is, $D \geq 0$ and $D(\mu\midd\nu) = 0 \Leftrightarrow \mu = \nu$). Assuming that one can find a global minimizer $\smash{\hat\theta}$ of $\LD$, it is then possible to perform \emph{one-step} inference using $f_{\smash{\hat\theta}}$.
Unfortunately, this loss has no reason to exhibit convexity properties in parameter space and, as such, it seems unlikely that a plain Euclidean gradient descent over $\theta$ will eventually provide a satisfying model in practice. For future reference, the Euclidean gradient of $\LD$ can be computed using the chain rule:
\begin{equation}
    \label{eq:gradient-loss-D}
    \nabla_\theta\LD(\theta)= \int_{\bR^d} (D_\theta f_\theta(x))^\top\nablaa F(f_\theta{}_*\rhos)\circ f_\theta(x)\dd\rhos(x),
\end{equation}
where $\nablaa F$ is the \emph{Wasserstein gradient}\footnote{The Wasserstein gradient $\nablaa F(\rho)$ of $F:\PRd\to\bR$ at some $\rho\in\PRd$, if it exists, is defined by $F((\id + h)_* \rho) = F(\rho) + \int_{\bR^d} \langle\nablaa F(\rho), h\rangle \dd \rho + o(h)$; see \cite[Chap.~10]{ambrosio2008gradient}.} of $F:\PRd\to\bR$.
\smallskip

\noindent\textbf{The drifting approach.} To overcome this issue, \cite{deng2026generative} proposes an alternative approach to find a suitable parameter $\theta$.
Instead of updating $\theta$ using the standard Euclidean increment $\delta \theta = - \nabla_\theta \LD(\theta)$, they propose to look at the levels of maps: one seeks for a variation $\delta \theta$ in parameter space such that the resulting increment $f_{\theta + \delta \theta} - f_\theta$ \emph{in the space of maps} reproduces a prescribed ``drift field'' $V_\theta: \bR^d \to \bR^d$.
To do so, they formally introduce the loss
\begin{equation}
\tag{$\Ldrift$}
\label{eq:loss-drift}
\Ldrift:\Theta\to\bR,\qquad \Ldrift( \theta)\coloneqq \smash{\frac12}\|f_\theta - \sg(f_\theta + V_{\theta} \circ f_\theta)\|^2_{\rhos},
\end{equation}
where $\sg$ is the ``stop-gradient'' operator.
\begin{remark}[The stop-gradient operator]
The operator $\sg$ should be understood in the following way. $(i)$ The loss value $\Ldrift(\theta)$ is simply given by $\|f_\theta-(f_\theta+V_{\theta} \circ f_\theta)\|^2_{\rhos}=\|V_{\theta} \circ f_\theta\|^2_{\rhos}$;
$(ii)$ However, when trying to minimize $\Ldrift$, one is only allowed to iterate on $\theta$ by considering variations of the left term and what's inside the $\sg(...)$ should be treated as a constant. From a practitioner viewpoint, using the \texttt{PyTorch} syntax, this boils down to add a \texttt{.detach()} when implementing $\Ldrift$. Hence, the Euclidean gradient of $\Ldrift$ is given by
\begin{equation}
\label{eq:gradient-loss-drift}
\nabla_\theta\Ldrift(\theta) \coloneqq -\int_{\bR^d} (D_\theta f_\theta (x))^\top V_\theta \circ f_\theta(x) \dd \rhos(x).\qedhere
\end{equation}
\end{remark}
\begin{example}[Drifting with mean-shift fields]
    \label{ex:drift}
    In the seminal paper \cite{deng2026generative}, the authors propose to use the vector field
\begin{equation}
    \label{eq:drift-field}
    V_\theta(x)\coloneqq\frac{\bE_\rhot[k(x,y)(y-x)]}{\bE_\rhot[k(x,y)]}-\frac{\bE_{\rho_\theta}[k(x,y)(y-x)]}{\bE_{\rho_\theta}[k(x,y)]},
\end{equation}
    where $k$ is some kernel function, chosen to be the Laplace kernel in their work, and where $\rho_\theta\coloneqq f_\theta{}_*\rhos$. This drift field is the sum of an attraction term that drives particles toward $\rhot$ and a repulsion term that prevents collapse.
\end{example}

\begin{example}[Drifting with Wasserstein gradient fields]
    \label{ex:wass}
    In order to drive particles toward the target distribution $\rhot$, we may also use the information provided by the divergence $D$ in $\PRd$: if $F:\rho\mapsto D(\rho\midd\rhot)$ exhibits some convexity in the Wasserstein geometry, then negative its {Wasserstein gradient} $-\nablaa F(\rho)$ gives a direction of descent toward $\rhot$, in the sense that its flow map $\varphi_t$ converges to some map pushing $\rhos$ onto $\rhot$ when $t\to\infty$, and one may therefore consider the drift field
    \begin{equation}
    V_\theta(x) \coloneqq -\nablaa F(\rho_\theta)(x),
    \end{equation}
    where $\rho_\theta \coloneqq f_\theta{}_* \rhos$.
    Convexity of $F$ holds for instance when $D$ is the \emph{Kullback--Leibler (KL) divergence} (or \emph{relative entropy}) $D(\rho \midd \rhot) \coloneqq \smash{\int_{\bR^d}} \log( \smash{\frac{\dd \rho}{\dd \rhot}}) \dd \rho$ whenever $\rhot$ is log-concave~\cite[Th.~9.4.11]{ambrosio2008gradient}. See \cite{gretton2026wasserstein} for other choices for $D$.
\end{example}
\noindent It is worth mentioning that \cref{ex:drift,ex:wass} are nearly mutually exclusive: it was shown in \cite{franz2026drifting} that except when $k$ is a Gaussian kernel---a choice which had been suggested early by \cite{weber2023score}---, the mean-shift drifting field \cref{eq:drift-field} is \emph{not a Wasserstein gradient}.
    Waiving this gradient requirement allows for a much broader range of vector fields, which may, computationally, exhibit good convergence properties, as demonstrated by \cite{deng2026generative}.
    In the rest of this paper, however, we will focus on the case where the drift \emph{is} a Wasserstein gradient, as in \cref{ex:wass}, which will allow for a useful interpretation of the optimization as a natural gradient descent and for convergence guarantees. In this case, one has the following result, which is direct by comparing \cref{eq:gradient-loss-D,eq:gradient-loss-drift}.

    \begin{proposition}[Euclidean gradient descent for the drifting loss, {\cite[Prop.~8]{gretton2026wasserstein}}]
    Assume that the drift field is $V_\theta\coloneqq-\nablaa F(f_\theta{}_*\rhos)$. Then for all $\theta\in\Theta$, $\nabla_\theta\Ldrift(\theta)=\nabla_\theta\LD(\theta)$.
    As such, the (Euclidean) gradient descent of the drifting loss \cref{eq:loss-drift} is identical to that of \cref{eq:loss-D} and reads
    \begin{equation}
    \tag{\textsc{eucl.~gd}}
    \label{eq:eucl-gd}
        \theta_{k+1}
        = \theta_k - \tau \nabla_\theta\LD(\theta).
    \end{equation}
    \end{proposition}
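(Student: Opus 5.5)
The plan is a direct comparison of the two gradient formulas already at hand: \cref{eq:gradient-loss-D} for $\LD$, obtained by the chain rule through the Wasserstein gradient, and \cref{eq:gradient-loss-drift} for $\Ldrift$, obtained by treating the $\sg(\cdot)$ term as a constant.

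First I would justify \cref{eq:gradient-loss-D}, since it is the only non-formal ingredient. Fix $\theta$ and a small increment $\delta\theta$. Write
\[
f_{\theta+\delta\theta}=f_\theta+D_\theta f_\theta\,\delta\theta+o(\delta\theta)
\]
in $\Lrhos$. Then $f_{\theta+\delta\theta}{}_*\rhos=(\id+h)_*\rho_\theta$ with $h\coloneqq(f_{\theta+\delta\theta}-f_\theta)\circ f_\theta^{-1}$. When $f_\theta$ is not invertible, I would work directly with the coupling $x\mapsto(f_\theta(x),f_{\theta+\delta\theta}(x))$; the definition of the Wasserstein gradient in the footnote extends to such perturbations along plans, cf.\ \cite[Chap.~10]{ambrosio2008gradient}. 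Applying the first-order expansion of $F$ gives
\[
\LD(\theta+\delta\theta)-\LD(\theta)=\int_{\bR^d}\big\langle \nablaa F(\rho_\theta)\circ f_\theta(x),\,D_\theta f_\theta(x)\,\delta\theta\big\rangle\dd\rhos(x)+o(\delta\theta).
\]
Reading off the coefficient of $\delta\theta$ yields \cref{eq:gradient-loss-D}. I expect this step to be the main obstacle. It needs enough regularity (differentiability of $\theta\mapsto f_\theta$ in $\Lrhos$, and existence of $\nablaa F(\rho_\theta)$ in $L^2(\rho_\theta)$) for the remainder to be $o(\|\delta\theta\|)$. I would state these as standing assumptions, consistent with the formal level of the paper.

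Second, I would differentiate $\Ldrift$ with the stop-gradient term frozen at $c\coloneqq f_\theta+V_\theta\circ f_\theta$. The gradient of $\tfrac12\|f_\theta-c\|^2_{\rhos}$ is
\[
\int_{\bR^d} (D_\theta f_\theta)^\top(f_\theta-c)\dd\rhos=-\int_{\bR^d}(D_\theta f_\theta)^\top V_\theta\circ f_\theta\dd\rhos,
\]
which is \cref{eq:gradient-loss-drift}.

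Finally, I would substitute $V_\theta=-\nablaa F(\rho_\theta)$ into \cref{eq:gradient-loss-drift}. The two minus signs cancel, and the result coincides pointwise in $\theta$ with \cref{eq:gradient-loss-D}. Hence the Euclidean gradient descent iterations on $\Ldrift$ and on $\LD$ are the same map $\theta\mapsto\theta-\tau\nabla_\theta\LD(\theta)$, which is \cref{eq:eucl-gd}.
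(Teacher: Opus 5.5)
Your proposal is correct and follows the paper's own argument: the paper likewise just substitutes $V_\theta=-\nablaa F(\rho_\theta)$ into \cref{eq:gradient-loss-drift}, notes that the signs cancel to give \cref{eq:gradient-loss-D}, and concludes that the two Euclidean descent maps coincide. The paper takes \cref{eq:gradient-loss-D} for granted from the chain rule, so your sketch of it is a sensible extra; the plan-based expansion you need when $f_\theta$ is non-injective is most cleanly stated as Fréchet differentiability of the lift $f\mapsto F(f_*\rhos)$ on $\Lrhos$, taken as a standing assumption as you propose.
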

Yet, one is not limited to this and one may use any optimization scheme such as, for instance, the \textsc{adamw} scheme used in \cite{deng2026generative}.
In the next section, we show that when one rather uses the \emph{Gauss--Newton} scheme, the evolution in parameter space can be seen as a natural gradient descent and convergence guarantees to the minimizer can be established.


\section{Natural gradient interpretation of drifting models with Gauss--Newton}

In this section, we first establish the equivalence between the Gauss--Newton scheme for the drift loss and the natural gradient descent method (\cref{cor:natural-gf}). We then capitalize on this result to obtain a convergence result for this scheme, performed over an expressive enough class of neural networks (\cref{prop:cvg-Kn}).

\subsection{The Gauss--Newton scheme}
\begin{definition}[Gauss--Newton scheme]
Let $L:\Theta\to\bR$ be some loss function of the form $L(\theta)= \smash{\frac12}\|\ell(\theta)\|^2$, where $\ell:\Theta\to \bR^d$ is differentiable.
The \emph{Gauss--Newton scheme} aims at solving the minimization problem $\min_{\delta \theta} \|\ell(\theta + \delta \theta)\|^2$ by iteratively solving the locally linearized problem
\begin{equation}
    \label{eq:gauss-newton}
    \theta_{k+1}=\theta_k+\tau \delta\theta_k,\qquad\text{with }\delta\theta_k\in \argmin_{\delta \theta} \|\ell(\theta_k) + J_{\theta_k}\delta\theta \|^2,
\end{equation}
where $J_{\theta_k}\coloneqq D_\theta \ell(\theta_k)\in\bR^{d\times m}$ is the Jacobian matrix of $\ell$ at $\theta_k$ and $\tau > 0$ is some fixed step size.
If $\smash{J_{\theta_k}^\top J_{\theta_k}}\in\bR^{m\times m}$ is non-singular, this regression problem admits the unique solution
\begin{equation}
\label{eq:GN_update_explicit}
\delta\theta = - [J_{\theta_k}^\top J_{\theta_k}]^{-1} J_{\theta_k}^\top \ell(\theta_k).
\end{equation}
Whenever $\smash{J_{\theta_k}^\top J_{\theta_k}}$ is singular, one can replace the inverse by a Moore--Penrose pseudoinverse to obtain the minimal-norm solution to \cref{eq:gauss-newton}. Adding elements of $\ker J_{\theta_k}$ then describes the whole set of solutions.
\end{definition}
\begin{remark}[Link with Newton's method]
Newton's method amounts to preconditioning the gradient $\nabla_\theta L(\theta)=J_\theta^\top \ell(\theta)$ with the inverse of the Hessian matrix of $L$, which is the sum of $\smash{J_\theta^\top} J_\theta$ and a second-order derivative.
As it can be seen from \cref{eq:GN_update_explicit}, the Gauss--Newton scheme merely uses the inverse of $\smash{J_\theta^\top} J_\theta$ as a preconditioner, therefore approximating the full Hessian of $L$ by discarding the second-order derivatives.
\end{remark}

\subsection{Gauss--Newton for the drifting loss.}
Applying the Gauss--Newton scheme \cref{eq:gauss-newton} to the drifting loss \cref{eq:loss-drift}, we get that the successive parameter updates read $\theta_{k+1}=\theta_k+\tau\delta\theta_k$ with
\begin{subequations}
\begin{align}
    \delta\theta_k
    &\in \argmin_{\delta \theta} \| - V_{\theta_k}\circ f_{\theta_k}+D_\theta f_{\theta_k} . \delta\theta \|^2_{\rhos}
    \label{eq:gauss-newton-drift-implicit}
    \\
    &= \Big[\int_{\bR^d}(D_\theta f_{\theta_k})^\top D_\theta f_{\theta_k}\dd\rhos\Big]^{-1} \int_{\bR^d} (D_\theta f_{\theta_k})^\top V_{\theta_k}\circ f_{\theta_k} \dd\rhos,
    \label{eq:gauss-newton-drift-explicit}
\end{align}
where \cref{eq:gauss-newton-drift-explicit} holds assuming the invertibility of $G_{\theta_k}\coloneqq\smash{\int_{\bR^d}}(D_\theta f_{\theta_k})^\top D_\theta f_{\theta_k}\dd\rhos$, while \cref{eq:gauss-newton-drift-implicit} always holds and can be used faithfully in practice, moreover, without having to pay the computational cost of inverting this $m\times m$ matrix.
Replacing $V_\theta$ by its value as the Wasserstein gradient of $F$ yields the more compact
\begin{equation}
    \label{eq:gauss-newton-drift-explicit2}
\theta_{k+1}= \theta_k - \tau  \smash{G_{\theta_k}^{-1}}\nabla_\theta\LD(\theta_k).
\end{equation}
\end{subequations}
\smallskip

\noindent\textbf{Relation to natural gradient flows.}
Looking at \cref{eq:gauss-newton-drift-explicit2}, it is explicit that the only (but crucial) difference with the Euclidean gradient flow \cref{eq:eucl-gd} is the preconditioning matrix $\smash{G_\theta^{-1}}$. As noted by \cite{king2026gauss}, this hints at a connection with the \emph{natural gradient} schemes, which we detail below.

\begin{definition}[$\Lrhos$-natural gradient descent]
    Let $\theta\mapsto f_\theta$ be differentiable from $\Theta$ to $\Lrhos$ and whose differential is injective at all $\theta\in\Theta$, and let $f\mapsto F(f_*\rhos)$ be differentiable from $\Lrhos$ to $\bR$.
    Then the \emph{$\Lrhos$-natural gradient descent} of $\theta\mapsto F(f_\theta{}_*\rhos)$ on $\Theta$ is its gradient descent with respect to the pullback metric (see \cref{def:pullback} in the appendix) of the flat $\Lrhos$-metric by the map $\theta\mapsto f_\theta$.
\end{definition}
\noindent This procedure takes origins in the seminal work of \cite{amari1998natural} that pulled back the Fisher--Rao metric from $\PRd$ to $\Theta$.
Using the definition of the pullback metric, one sees that it is in our case precisely $G_\theta$ (see \cref{app:Gtheta_pullback} in the appendix for a short proof).
This yields the following proposition, another proof of which can be found in \cite[Cor.~3.8]{dumont2026learning}.

\begin{proposition}[Gauss--Newton for the drifting loss is a natural gradient descent]
    \label{cor:natural-gf}
    Let $\theta\mapsto f_\theta$ be differentiable and such that $G_\theta$ is invertible, and let $F:\PRd\to\bR$ be Wasserstein differentiable.
    Assume that the drift field is $V_\theta\coloneqq-\nablaa F(f_\theta{}_*\rhos)$.
    Then the Gauss--Newton scheme \cref{eq:gauss-newton-drift-implicit} on the drifting loss \cref{eq:loss-drift} is the $\Lrhos$-natural gradient descent of \cref{eq:loss-D} on $\Theta$.
\end{proposition}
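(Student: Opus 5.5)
The plan is to compute both schemes explicitly and check that they coincide. There are three ingredients, and all of them are already available from the earlier discussion.

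\textbf{Step 1: the Gauss--Newton update.} Since $G_\theta$ is invertible, the least-squares problem \cref{eq:gauss-newton-drift-implicit} is strictly convex in $\delta\theta$. Its normal equations are
\begin{equation}
G_{\theta_k}\delta\theta_k=\int_{\bR^d}(D_\theta f_{\theta_k})^\top V_{\theta_k}\circ f_{\theta_k}\dd\rhos .
\end{equation}
By \cref{eq:gradient-loss-drift}, the right-hand side equals $-\nabla_\theta\Ldrift(\theta_k)$. By the proposition of \cite[Prop.~8]{gretton2026wasserstein} applied with $V_\theta=-\nablaa F(f_\theta{}_*\rhos)$, this is in turn $-\nabla_\theta\LD(\theta_k)$. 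Alternatively, one can compare \cref{eq:gradient-loss-D} and \cref{eq:gradient-loss-drift} directly. Either way, this recovers \cref{eq:gauss-newton-drift-explicit2}: $\theta_{k+1}=\theta_k-\tau G_{\theta_k}^{-1}\nabla_\theta\LD(\theta_k)$.

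\textbf{Step 2: the pullback metric.} I will identify the pullback of the flat $\Lrhos$ metric under $\phi:\theta\mapsto f_\theta$. For tangent vectors $u,v\in\bR^m$,
\begin{equation}
g_\theta(u,v)=\langle D_\theta f_\theta\, u, D_\theta f_\theta\, v\rangle_{\Lrhos}=u^\top\Big(\int_{\bR^d}(D_\theta f_\theta)^\top D_\theta f_\theta\dd\rhos\Big)v=u^\top G_\theta v .
\end{equation}
Exchanging the integral with the bilinear form only requires $D_\theta f_\theta\in L^2(\rhos)$, which follows from differentiability of $\phi$ into $\Lrhos$. Injectivity of the differential is equivalent to positive-definiteness of $G_\theta$, so $g$ is a genuine Riemannian metric. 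This matches the hypothesis that $G_\theta$ is invertible.

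\textbf{Step 3: the Riemannian gradient.} The gradient of $\LD$ for $g$ is the unique vector $w$ with $g_\theta(w,v)=D\LD(\theta)[v]=\langle\nabla_\theta\LD(\theta),v\rangle$ for all $v$. This gives $w=G_\theta^{-1}\nabla_\theta\LD(\theta)$. The natural gradient descent with step $\tau$ is therefore $\theta_{k+1}=\theta_k-\tau G_{\theta_k}^{-1}\nabla_\theta\LD(\theta_k)$, which is exactly the update from Step 1.

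The only delicate point is regularity. The chain rule \cref{eq:gradient-loss-D} requires that $\theta\mapsto F(f_\theta{}_*\rhos)$ be differentiable with the stated formula, given Wasserstein differentiability of $F$ and differentiability of $\phi$ into $\Lrhos$. I will take this from the footnote definition of the Wasserstein gradient, applied with $h=f_{\theta+\delta\theta}\circ f_\theta^{-1}-\id$. To avoid needing invertibility of $f_\theta$, I will instead work with the differentiability of $f\mapsto F(f_*\rhos)$ on $\Lrhos$, as assumed in the definition of natural gradient descent. Its $\Lrhos$-gradient at $f$ is $\nablaa F(f_*\rhos)\circ f$, and composing it with $D_\theta f_\theta$ gives \cref{eq:gradient-loss-D}.
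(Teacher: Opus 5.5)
Your proposal is correct and follows the paper's own argument. The normal equations of \cref{eq:gauss-newton-drift-implicit}, combined with $\nabla_\theta\Ldrift=\nabla_\theta\LD$, give \cref{eq:gauss-newton-drift-explicit2}; the pullback of the flat $\Lrhos$ metric is $G_\theta$ (\cref{app:Gtheta_pullback}); and the gradient for this metric is $G_\theta^{-1}\nabla_\theta\LD$, so the two updates coincide. Your extra regularity remarks go beyond what the paper writes, since it simply takes \cref{eq:gradient-loss-D} as given. One caveat there: for non-injective $f$, identifying the $\Lrhos$-gradient of $f\mapsto F(f_*\rhos)$ with $\nablaa F(f_*\rhos)\circ f$ is asserted in your sketch rather than proved, but the paper relies on the same formula without proof, so this is not a gap relative to it.
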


\noindent As such, whereas the standard Euclidean gradient descent \cref{eq:eucl-gd} imposes a flat metric on the parameter space $\Theta$ and yields an evolution in a curved $\Lrhos$, the $\Lrhos$-natural gradient descent imposes a simpler geometry on $\Lrhos$: its flat Hilbert structure. The image $f_{\theta_k}=\phi(\theta_k)$ therefore draws the best approximation of the gradient descent of $\smash{\widetilde F}:\Lrhos\to\bR$ in the subset $\phi(\Theta)$, akin to the Dirac--Frenkel principle \cite[Sec.~II.1]{lubich2008quantum}.
Whenever the functional is well-behaved in $\PRd$ with the Wasserstein metric (such as the KL divergence, convex whenever the reference measure is log-concave) and since this space is strongly linked\footnote{The pushforward mapping $f\mapsto f_*\rhos$ can be seen as an informal Riemannian submersion between $\Lrhos$ and the Wasserstein space \cite{otto2001geometry}.} to $\Lrhos$, this pullback geometry on $\Theta$ seems better suited for guaranteeing convergence of the descent taking place on $\Lrhos$, as we demonstrate in the next paragraph.
As a side note, \cref{cor:natural-gf} also shows that in the case where the drift field $V_\theta$ is not necessarily a Wasserstein gradient, the Gauss--Newton scheme for the drifting loss can be seen as an \emph{extension} of the natural gradient descent method.

\begin{remark}[On the degeneracy of $G_\theta$]
In practice, the Jacobian $D_\theta f_\theta$ is not injective, as there are directions $\delta\theta$ that do not change the model $f_\theta$. This implies that $G_\theta$ is degenerate, and therefore that minimizers of \cref{eq:gauss-newton-drift-implicit} are not unique. Yet, one may ask for the one of \emph{minimum norm}: it is given by \cref{eq:gauss-newton-drift-explicit2} but where the inverse $\smash{{G_\theta^{-1}}}$ is replaced by the Moore--Penrose pseudoinverse $\smash{G_\theta^\dagger}$.
Alternatively, one may also regularize \Cref{eq:gauss-newton-drift-implicit} with a weighted $\ell^2$ norm on $\theta$, which amounts to the Levenberg--Marquardt method, that is, replacing the inverse $\smash{{G_\theta^{-1}}}$ in \cref{eq:gauss-newton-drift-explicit2} by $\smash{(G_\theta+\lambda I)^{-1}}$.
\end{remark}
\smallskip

\noindent\textbf{Convergence guarantees for the Gauss--Newton drifting.}
We now present our main result, stated in \cref{prop:cvg-Kn} below.
First, note that the scheme \cref{eq:gauss-newton-drift-explicit2} is an \emph{explicit} gradient descent. As such, one cannot hope for convergence results on it without assuming some smoothness on the functional $F$---typically, some Lipschitz continuity on its gradient, an assumption which is rarely satisfied. To get those convergence results, one may rather consider the \emph{implicit} version of this scheme.
This gradient descent, written on the set $K\subset\Lrhos$ of functions that can be described by the neural network class\footnote{For instance, $K$ can be the set of convex functions in the case of ICNNs~\cite{amos2017input}, LSE networks~\cite{calafiore2019log} or Max-Affine models \cite{ghosh2021max}, the set of gradients of convex functions in the context of learning OT maps~\cite{chaudhari2023learning}, nonnegative functions...}, reads $\T{0}\in K$ and for all $k\geq0$,
\begin{equation}
    \label{eq:jko}
    \T{k+1}=\argmin_{f\in K} \Phi(f,\T{k}),
\end{equation}
where $\Phi:\Lrhos\times\Lrhos\to\bR$ is the proximal mapping
\begin{equation}
    \Phi(f,\overline f)\coloneqq \smash{\widetilde F}(f)+\frac1{2\tau}\|f-\overline f\|^2_{\rhos}.
\end{equation}
with $\smash{\widetilde F}:f\mapsto F(f_*\rhos)$.
If $K$ is closed and convex and $\smash{\widetilde F}$ is convex on $K$, then $\T{k}\to\argmin_{K} \smash{\widetilde F}$ with exponential convergence rate.
For instance, this is true for the KL divergence with respect to some $\lambda$-log-concave measure $\rhot$ with $\lambda>0$ and $K$ the set of gradients of convex functions~\cite[Prop.~3.1]{dumont2026learning}.
In practice, however, one merely has access to parameterizations $\phi:\theta\mapsto f_\theta$, which take values in some subset of $\phi(\Theta)\subset K$. Yet, in the large parameter limit, the neural network class may become expressive enough to approximate the whole set $K$, as in the case of, for instance, ICNNs~\cite{chen2018optimal}.
In that case, there exists some increasing sequence $K_n\coloneqq \phi_n(\Theta_n)$ that grows to $K$ and one can hope that the scheme \cref{eq:jko} on those $K_n$ also converges to the minimizer of $F$ in the limit $n\to\infty$. This scheme consists in the iterates $(\Snk{k})_k$ defined by $\Snk{0}\in K_n$ and for all $k\geq0$,
\begin{equation}
    \label{eq:jko_Kn_exact}
    \Snk{k+1}\in\argmin_{g\in K_n} \Phi(g,\Snk{k}).
\end{equation}
Since it is not given that such a minimizer exists in $K_n$ (as, while we assume $K$ closed and convex, no such assumption can be made on $K_n$ in general) and because, in practice, a minimizer is often not reached exactly, it is actually more relevant to relax this exact optimality and to rather solve
    \begin{equation}
        \label{eq:jko_Kn}
        \Snk{k+1}\in K_n
        \quad\text{such that}\quad
        \Phi(\Snk{k+1},\Snk{k})
        \leq
        \inf_{g\in K_n}\{\Phi(g,\Snk{k})\}+\eps_n,
    \end{equation}
    where $\eps_n>0$ is a tolerance satisfying $\eps_n\to0$. See \cref{rem:eps} for further discussions on this relaxation. Such a $\Snk{k+1}$ exists as soon as the infimum in \cref{eq:jko_Kn} is finite, which is the case if for instance $F$ is bounded below. Since all the statements below are asymptotic in $n$, we assume without loss of generality that $\Snk{k}$ is well defined for all $n,k\geq0$.
    We now state our main result, which shows that as the size of the neural network parameterization $K_n$ of $K$ increases, long-time iterates of \cref{eq:jko_Kn} converges to the minimizer of $\smash{\widetilde F}$ on $K$.
\begin{proposition}[Convergence of the discretized scheme to the minimizer]
    \label{prop:cvg-Kn}
Let $K\subset\Lrhos$ be a closed convex set and let $\smash{\widetilde F}:\Lrhos\to\bR$ be strongly continuous, weakly lower semicontinuous, convex and lower-bounded on $K$.
Let $\phi_n:\Theta_n\to\Lrhos$ be neural networks of (nonempty) finite-dimensional parameter spaces $\Theta_n$ and let $K_n\coloneqq \phi_n(\Theta_n)\subset K$. Assume that $K_n\subset K_{n+1}$ for all $n\geq0$ and that $\smash{\overline {\bigcup_n K_n}=K}$. Let $\tau>0$ be some fixed step size and let $(\eps_n)_n$ be positive tolerances with $\eps_n\to0$. Assume that the initializations satisfy $\Snk{0}\to\T{0}$. Then, the iterates $\Snk{k}$ of \cref{eq:jko_Kn} satisfy
\begin{equation}
\smash{\lim_{k\to\infty}\lim_{n\to\infty}}\Snk{k}=f^\star,\qquad \text{where }f^\star=\smash{\argmin_{K}}\,\smash{\widetilde F},
\end{equation}
and therefore there exists a sequence $n_k\to\infty$ such that $g_k^{n_k}\to f^\star$.
\end{proposition}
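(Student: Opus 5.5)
The plan has two stages. First, show that for each fixed $k$, $\Snk{k}\to\T{k}$ strongly in $\Lrhos$ as $n\to\infty$, where $(\T{k})_k$ are the exact proximal iterates \cref{eq:jko} on $K$. Second, invoke the convergence $\T{k}\to f^\star$ of the implicit scheme on the closed convex set $K$, recalled just before the statement. For this second stage, note that $\Phi(\cdot,\overline f)$ is $\frac1\tau$-strongly convex, strongly continuous, weakly l.s.c. and coercive on $K$. So each $\T{k+1}$ exists and is unique, and the standard proximal-point argument gives $\T{k}\to f^\star$; uniqueness of $f^\star$ is implicitly assumed in the statement. The existence of $n_k$ then follows from a diagonal extraction: pick $n_k$ increasing with $\|g^{n_k}_k-\T{k}\|_{\rhos}\le 1/k$.

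The first stage is an induction on $k$, with the base case $\Snk{0}\to\T{0}$ being an assumption. Suppose $\Snk{k}\to\T{k}$. I first bound the infimum from above. Since $\overline{\bigcup_n K_n}=K$ and the $K_n$ increase, I choose $h_n\in K_n$ with $h_n\to\T{k+1}$. Strong continuity of $\widetilde F$ and $\Snk{k}\to\T{k}$ then give
\begin{equation}
\limsup_n \Phi(\Snk{k+1},\Snk{k})\le \limsup_n\big(\Phi(h_n,\Snk{k})+\eps_n\big)=\Phi(\T{k+1},\T{k}).
\end{equation}
Next, strong convexity of $\Phi(\cdot,\Snk{k})$ on the convex set $K\ni \Snk{k+1},\T{k+1}$ gives the quadratic growth inequality
\begin{equation}
\Phi(g,\overline f)\ge \Phi(\T{k+1}^{\overline f},\overline f)+\tfrac1{2\tau}\|g-\T{k+1}^{\overline f}\|^2_{\rhos},
\end{equation}
where $\T{k+1}^{\overline f}$ denotes the exact prox of $\overline f$ on $K$. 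I want to apply it with $\overline f=\Snk{k}$. Nonexpansiveness of the proximal map of a convex functional on a convex set gives $\|\T{k+1}^{\Snk{k}}-\T{k+1}\|\le\|\Snk{k}-\T{k}\|\to0$. Continuity in the second argument then gives $\Phi(\T{k+1}^{\Snk{k}},\Snk{k})\to\Phi(\T{k+1},\T{k})$. Combining this with the limsup bound yields $\|\Snk{k+1}-\T{k+1}^{\Snk{k}}\|\to0$, and hence $\Snk{k+1}\to\T{k+1}$.

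The main obstacle is this induction step. It needs three ingredients: continuity of the values $\Phi(\T{k+1}^{\overline f},\overline f)$ in $\overline f$, the nonexpansiveness of the constrained prox, and uniqueness of the limit. If nonexpansiveness were awkward, I would fall back on a weak compactness argument. The sequence $\Snk{k+1}$ is bounded because $\Phi$ values are bounded and $\widetilde F$ is lower-bounded. Its weak cluster points lie in $K$, since $K$ is closed and convex and hence weakly closed. Weak l.s.c. then shows each cluster point minimizes $\Phi(\cdot,\T{k})$, so it equals $\T{k+1}$. Convergence of the norms upgrades weak convergence to strong convergence, via the strong-convexity inequality applied in the limit.
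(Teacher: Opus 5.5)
Your induction step is correct and takes a genuinely different route from the paper's; your second stage matches the paper's (convergence $f_k\to f^\star$ of the exact proximal iterates, then a diagonal choice of $n_k$).

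\textbf{The paper's route.} It anchors strong convexity at the fixed limit iterate. With $J_k=\Phi(\cdot,f_k)$ and $J_{n,k}=\Phi(\cdot,g^n_k)$, it writes $\frac1{2\tau}\|g^n_{k+1}-f_{k+1}\|_{\rhos}^2\le J_k(g^n_{k+1})-J_k(f_{k+1})$. It must then transfer the $\limsup$ bound obtained for $J_{n,k}$ over to $J_k$. Expanding $J_k-J_{n,k}$ produces the cross term $\langle g^n_{k+1},g^n_k-f_k\rangle_{\rhos}$, so it needs boundedness of $(g^n_{k+1})_n$; this is where the lower bound on $\widetilde F$ is used. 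It finishes with a weak-compactness and weak-l.s.c.\ argument that is essentially your fallback.

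\textbf{Your route.} You anchor instead at the moving prox $p_n=\argmin_K\Phi(\cdot,g^n_k)$ of the very functional that $g^n_{k+1}$ nearly minimizes. So you need no transfer between functionals, no boundedness and no compactness. The price is two ingredients. First, the nonexpansiveness $\|p_n-f_{k+1}\|_{\rhos}\le\|g^n_k-f_k\|_{\rhos}$. It follows in two lines by adding the quadratic-growth inequality at $p_n$ (tested at $f_{k+1}$) to the one at $f_{k+1}$ (tested at $p_n$): the $\widetilde F$ terms cancel and leave $\langle p_n-f_{k+1},g^n_k-f_k\rangle_{\rhos}\ge\|p_n-f_{k+1}\|^2_{\rhos}$. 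Second, joint continuity of $\Phi$. Continuity in the second argument alone is not enough, since $p_n$ moves too.

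In exchange, your route is quantitative:
\[
\|g^n_{k+1}-f_{k+1}\|_{\rhos}\le\|g^n_k-f_k\|_{\rhos}+\big(2\tau\,[\Phi(g^n_{k+1},g^n_k)-\min_K\Phi(\cdot,g^n_k)]\big)^{1/2}.
\]
So per-step errors (the tolerance $\eps_n$ plus the approximation gap of $K_n$) accumulate additively, without amplification. The compactness argument cannot give this.

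\textbf{A caveat on the shared second stage.} For a merely convex l.s.c.\ functional on an infinite-dimensional Hilbert space, the proximal point method is in general only weakly convergent (G\"uler's 1991 counterexample). So norm convergence $f_k\to f^\star$ is not a ``standard proximal-point argument''; it needs extra structure such as strong convexity, consistent with the exponential rate the paper quotes. The paper likewise takes this step from its cited reference rather than proving it.
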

\noindent The result directly follows from the following lemma, which states that when the size of $K_n$ increases, one asymptotically recovers the iterates of the implicit gradient descent \cref{eq:jko} of the \emph{convex} functional $\smash{\widetilde F}$ on the closed convex set $K$, for which global convergence to $f^\star$ as $k\to\infty$ is guaranteed~\cite[Prop.~3.1]{dumont2026learning}.
\begin{lemma}[Convergence of the discretized scheme to the continuous scheme]
    \label{lem:cvg-Kn}
    Let the assumptions of \cref{prop:cvg-Kn} be satisfied.
    Then for all $k\geq0$, the iterate $\Snk{k}$ of \cref{eq:jko_Kn} converges in $\Lrhos$ to the solution $\T{k}$ of \cref{eq:jko} as $n\to\infty$.
\end{lemma}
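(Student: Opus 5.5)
We argue by induction on $k$. The base case $k=0$ is the assumption $\Snk{0}\to\T{0}$. For the inductive step, assume $\Snk{k}\to\T{k}$ in $\Lrhos$ as $n\to\infty$, and show $\Snk{k+1}\to\T{k+1}$.

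First we observe that $\T{k+1}$ is well defined and unique. The map $f\mapsto\Phi(f,\T{k})$ is the sum of a convex, weakly lower semicontinuous, lower-bounded functional and a $\frac1\tau$-strongly convex quadratic. Hence it is strongly convex, coercive and weakly lower semicontinuous on the closed convex (so weakly closed) set $K$, and the direct method gives existence and uniqueness of the minimizer. Strong convexity moreover gives the quadratic growth estimate
\begin{equation}
\Phi(f,\T{k})\geq\Phi(\T{k+1},\T{k})+\frac1{2\tau}\|f-\T{k+1}\|_{\rhos}^2\qquad\text{for all }f\in K.
\end{equation}
This follows from the first-order optimality of $\T{k+1}$ over the convex set $K$ together with convexity of $\widetilde F$. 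The estimate is the key tool: it reduces strong convergence of $\Snk{k+1}$ to convergence of the values $\Phi(\Snk{k+1},\T{k})\to\Phi(\T{k+1},\T{k})$.

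\textbf{Upper bound (recovery sequence).} Since $\overline{\bigcup_n K_n}=K$ and the $K_n$ are nested, there exist $h^n\in K_n$ with $h^n\to\T{k+1}$ in $\Lrhos$: for $n$ large, pick elements of $K_n$ within distance $\delta_j\to0$. By strong continuity of $\widetilde F$, $\Phi(h^n,\Snk{k})\to\Phi(\T{k+1},\T{k})$, using also $\Snk{k}\to\T{k}$. The defining property \cref{eq:jko_Kn} then gives
\begin{equation}
\Phi(\Snk{k+1},\Snk{k})\leq\Phi(h^n,\Snk{k})+\eps_n,
\end{equation}
so $\limsup_n\Phi(\Snk{k+1},\Snk{k})\leq\Phi(\T{k+1},\T{k})$.

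\textbf{Changing the anchor.} Next we pass from $\Phi(\cdot,\Snk{k})$ to $\Phi(\cdot,\T{k})$. We have
\begin{equation}
\Phi(g,\T{k})-\Phi(g,\Snk{k})=\frac1{2\tau}\big(\|g-\T{k}\|^2-\|g-\Snk{k}\|^2\big),
\end{equation}
which is bounded by $\frac1{2\tau}\|\Snk{k}-\T{k}\|\,(\|g-\T{k}\|+\|g-\Snk{k}\|)$. To make this vanish at $g=\Snk{k+1}$ we need $\Snk{k+1}$ bounded. This follows from the limsup bound: $\widetilde F$ is bounded below, so $\frac1{2\tau}\|\Snk{k+1}-\Snk{k}\|^2$ stays bounded, and $\Snk{k}$ converges. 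Hence $\limsup_n\Phi(\Snk{k+1},\T{k})\leq\Phi(\T{k+1},\T{k})$. Since $\Snk{k+1}\in K_n\subset K$, the quadratic growth estimate yields $\|\Snk{k+1}-\T{k+1}\|_{\rhos}\to0$, closing the induction.

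I expect the main obstacle to be the quadratic growth inequality: it needs the variational inequality $\langle\partial\widetilde F(\T{k+1})+\frac1\tau(\T{k+1}-\T{k}),f-\T{k+1}\rangle\geq0$ without assuming differentiability of $\widetilde F$. I would avoid subgradients entirely. Apply convexity of $\Phi(\cdot,\T{k})$ along the segment $\T{k+1}+t(f-\T{k+1})\in K$, use the exact parallelogram identity for the quadratic part, compare with the minimal value, divide by $t$ and let $t\to0$. This gives the estimate with constant $\frac1{2\tau}$ directly.
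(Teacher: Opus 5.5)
Your proof is correct and follows the paper's argument step for step: induction on $k$, the $\frac{1}{2\tau}$-quadratic growth of $\Phi(\cdot,\T{k})$ around its minimizer $\T{k+1}$ on $K$, a recovery sequence in $K_n$ plus $\eps_n$-optimality giving $\limsup_n\Phi(\Snk{k+1},\Snk{k})\leq\Phi(\T{k+1},\T{k})$, boundedness of $\Snk{k+1}$ from the lower bound on $\widetilde F$, and the change of anchor by expanding the squared norms. There are two differences. First, you conclude directly from the quadratic growth estimate, whereas the paper additionally extracts a weakly convergent subsequence and invokes weak lower semicontinuity; that extra step is not needed, since $\Snk{k+1}\in K$ already gives $\Phi(\Snk{k+1},\T{k})\geq\Phi(\T{k+1},\T{k})$. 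Second, you justify the existence and uniqueness of $\T{k+1}$ and the quadratic growth inequality, which the paper takes for granted.
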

\begin{proof}
    Let us proceed by induction. By definition, $\Snk{0}=\id=\T{0}$. Assume now that for some $k\geq0$, $\Snk{k}\to \T{k}$ and let us show that $\Snk{k+1}\to \T{k+1}$.
    For the sake of conciseness, let us write $\Sn\coloneqq \Snk{k+1}$ and $f\coloneqq \T{k+1}$.
    Let $J_{n,k}\coloneqq \Phi(\cdot,\Snk{k})$ be the functional in \cref{eq:jko_Kn} and let $J_k\coloneqq \Phi(\cdot,\T{k})$ be that in \cref{eq:jko}.
    By $1/\tau$-strong convexity of $J_k$ and optimality of $f$ for $J_k$ on the convex set $K$, one gets
    \begin{equation}
        \label{eq:strong-cvx-and-optimality}
        \frac1{2\tau}\|\Sn-f\|^2_{\rhos}\leq J_k(\Sn)-J_k(f).
    \end{equation}
    It is therefore enough to show that $J_k(\Sn)$ converges to $J_k(f)$ to obtain the desired result.
    For that, we will compare it to $J_{n,k}(\Sn)$. Expanding the squared norms in $J_k$ and $J_{n,k}$ yields the following:
    \begin{equation}
        \label{eq:Jkdiff}
        J_k(\Sn)-J_{n,k}(\Sn)
        =\frac1{2\tau}\big(\|\T{k}\|^2_{\rhos}-\|\Snk{k}\|^2_{\rhos}+2\langle \Sn,\Snk{k}-\T{k}\rangle_{\rhos}\big).
    \end{equation}
    Now, consider some mappings $\hTn\in K_n$ such that $\hTn\to f$ (see \cref{lem:Tn} for the proof of their existence).
    Since $F$ is strongly continuous, $\smash{\widetilde F}(\hTn)\to \smash{\widetilde F}(f)$; and since by induction hypothesis $\Snk{k}\to \T{k}$, one has $\|\hTn-\Snk{k}\|^2_{\rhos}\to \|f-\T{k}\|^2_{\rhos}$. Finally, $J_{n,k}(\hTn)\to J_k(f)$.
    By $\eps_n$-optimality of $\Sn$ for $J_{n,k}$ on $K_n$ and since $\hTn\in K_n$, one also has $J_{n,k}(\Sn)\leq J_{n,k}(\hTn)+\eps_n$, and therefore, using that $\eps_n\to0$,
    \begin{equation}
        \label{eq:limsupinf}
        \limsup_{n\to\infty} J_{n,k}(\Sn)
        \leq\limsup_{n\to\infty} J_{n,k}(\hTn)
        = J_k(f)<\infty,
    \end{equation}
    and thus $J_{n,k}(\Sn)$ is bounded above by some $M\in\bR$. Then $\|\Sn-\Snk{k}\|^2_{\rhos}\leq 2\tau(M-m)$, where $m\in\bR$ is some lower bound on $\smash{\widetilde F}$. Since by induction hypothesis $(\Snk{k})_n$ converges to $\T{k}$, it is bounded, and $(\Sn)_n$ is therefore bounded as well.
    Hence, using once again that $\Snk{k}\to\T{k}$ and the Cauchy--Schwarz inequality, the whole expression \cref{eq:Jkdiff} tends to $0$ as $n\to\infty$. This in turns implies that
    \begin{equation}
    \label{eq:limsup}
    \limsup_{n\to\infty} J_{n,k}(\Sn)=\limsup_{n\to\infty} J_k(\Sn).
    \end{equation}
    By definition of the limit inferior, there exists some $(n_j)_j$ such that $J_k(\Snj)\to \liminf_n J_k(\Sn)$ as $j\to\infty$. Since $(\Sn)_n$ is bounded, $(\Snj)_j$ is bounded as well and one can further extract a subsequence (which we still index with $n_j$) that converges weakly to some $g^\infty$. Since $K$ is convex and strongly closed in the Hilbert space $\Lrhos$, it is weakly closed \cite[Th.~3.7]{brezis2011functional}, and $g^\infty\in K$.
    By the weak lower semicontinuity of $\smash{\widetilde F}$ and of the norm, hence of $J_k$, one gets
    \begin{equation}
        \label{eq:final-ineq}
        J_k(g^\infty)
        \leq
        \liminf_{j\to\infty} J_k(\Snj)
        =
        \liminf_{n\to\infty} J_k(\Sn)
        \leq
        \limsup_{n\to\infty} J_k(\Sn)
        \!\stackrel {\cref{eq:limsup}}{{\!\vphantom \leq} =}\!
        \limsup_{n\to\infty} J_{n,k}(\Sn)
        \!\stackrel {\cref{eq:limsupinf}}\leq \!
        J_k(f).
    \end{equation}
    The optimality of $f$ for $J_k$ on $K$ yields that $g^\infty=f$ and that all the inequalities in \cref{eq:final-ineq} are equalities. In particular, $J_k(\Sn)\to J_k(f)$ and re-injecting in \cref{eq:strong-cvx-and-optimality} yields that $\Sn\to f$.
    \end{proof}
\noindent It is worth mentioning that without any uniform bound in $k$ on the difference between $\Snk{k}$ and $\T{k}$, there is no hope of obtaining asymptotic convergence results for the iterates \cref{eq:jko_Kn} at fixed $n$.
\begin{remark}[On the relaxation from \cref{eq:jko_Kn_exact} to \cref{eq:jko_Kn}]
    \label{rem:eps}
    Relaxing the exact minimization \cref{eq:jko_Kn_exact} over $K_n$ into the $\eps_n$-optimality condition \cref{eq:jko_Kn} is closer to what an optimizer actually returns. It is also needed in general: when $K_n=\phi_n(\Theta_n)$ is the image of a finite-dimensional parameter space under a nonlinear map, $K_n$ is in general not closed (nor convex) in $\Lrhos$ \cite{petersen2020topologicalpropertiessetfunctions}. An exact minimizer of $\Phi(\cdot,\Snk{k})$ over $K_n$ therefore need not exist unless one asks, for instance, for compactness of the parameter space $\Theta_n$ and continuity of $\phi_n$. In that case, one may take $\eps_n=0$ in the statement of \cref{prop:cvg-Kn} and its proof.
\end{remark}
\begin{remark}[The case of the KL divergence on the set of gradients of convex functions]
    Let $F$ be the KL divergence with respect to some $\lambda$-log-concave measure $\gamma$, with $\lambda>0$. Then $F$ is minimized at $\gamma$, l.s.c.~with respect to the Wasserstein topology~\cite[Cor.~15.7]{ambrosio2021lectures} and $\lambda$-convex along generalized geodesics \cite[Th.~9.4.11]{ambrosio2008gradient} on $\PRd$. If $K$ is a closed convex subset of the set of gradients of convex functions, then $\smash{\widetilde F}$ is therefore convex and weakly l.s.c.~on $K$~\cite[Lem.~B.3, B.4, B.5]{dumont2026learning}.
    The strong continuity assumption on $\smash{\widetilde F}$, however, is not satisfied by the KL divergence. Yet, this assumption can actually be weakened: one merely needs the \emph{existence} of a recovery sequence $\hTn\in K_n$ such that $\hTn\to f$ and $\smash{\widetilde F}(\hTn)\to \smash{\widetilde F}(f)$. One can for instance find such a sequence in the case of $C^1$-diffeomorphisms on a compact domain $\Omega$ whenever $\rho_0$ and $\gamma$ have positive and continuous densities $\mu_0$ and $\nu$. The KL divergence reads in this case
    \begin{equation}
        \smash{\widetilde F}(f)=\int_\Omega \mu_0(x)\log\frac{\mu_0(x)}{\nu(f(x))\det Df(x)}\dd x.
    \end{equation}
    Then, given that one may approximate $f$  by some $\hTn$ in the $C^1$ topology---which is true for specific neural network classes~\cite{hornik1990universal}, one gets $\smash{\widetilde F}(\hTn)\to \smash{\widetilde F}(f)$ and the recovery sequence needed in the proof of \cref{lem:cvg-Kn}, allowing to apply \cref{prop:cvg-Kn} to obtain the convergence of the discretized scheme \cref{eq:jko_Kn} to the minimizer of $\smash{\widetilde F}$ on $K$.
\end{remark}

\section{Conclusion}
\noindent\textbf{Contribution.}
    In this work, we study the convergence properties of the optimization of the drifting loss when the drift field is the Wasserstein gradient of some function $F$. Under suitable convexity assumptions on $F$, we show that, unlike with a plain Euclidean gradient descent, one can guarantee convergence to the minimizer of $F$ if using a Gauss--Newton scheme to minimize the drift loss. This is done by exploiting the fact that the Gauss--Newton scheme is equivalent to performing a natural gradient descent on parameter space, which, as a side note, allows to see the Gauss--Newton optimization of the drifting loss with an arbitrary drift field as a generalization of natural gradient descent for non-gradient vector fields.
\smallskip

\noindent\textbf{Open questions.}
\emph{A quest for well-behaved functionals on $\Lrhos$.}
In order to conclude to the convergence of the Gauss--Newton scheme, \cref{prop:cvg-Kn} requires the convexity of the functional $\smash{\widetilde F}:f\mapsto F(f_*\rhos)$ on the space $\Lrhos$ of maps. Although \cite[Lem.~B.4]{dumont2026learning} establishes some conditions on $F$---namely, convexity along generalized convexity---that ensures this \emph{lifted} convexity, few functionals on $\PRd$ do satisfy them, and if they do, lifted convexity is only guaranteed on the set of gradients of convex functions (that is, optimal transport maps). It would therefore be of great interest to find other examples of functionals $F$ on $\PRd$ that ensures convexity of $\smash{\widetilde F}$, or a mere Polyak--Łojasiewicz condition~\cite{polyak1964gradient,lojasiewicz1963topological} on it, or even functionals with no apparent convexity but with convergence properties, such as the Maximum Mean Discrepancy~\cite{chizat2026quantitative}.
\\\emph{Beyond gradient drift fields.}
While our analysis focuses on Wasserstein gradients, the choice of drift fields is not limited to this setting~\cite{deng2026generative}. It is worth investigating whether one may find examples or classes of drift fields for which it is possible to establish convergence guarantees for the optimization of the drifting loss as well. A particularly interesting direction may be to look at vector fields whose flow map converges to a map pushing $\rhos$ onto $\rhot$, despite lacking an apparent gradient or convexity structure. Understanding which specific properties of the drift field are sufficient to ensure convergence in this general setting could substantially widen the scope of the drifting method.
\\\emph{Beyond Gauss--Newton.}
We demonstrated that switching from plain Euclidean gradient descent to a Gauss--Newton scheme leads to different dynamics with, in our setting, improved convergence guarantees.
This naturally raises the question of how other optimizers interact with the drifting loss, such as, for instance, the \textsc{adamw} optimizer considered in \cite{deng2026generative}.
More generally, we view the drifting method as a pair $($drift field, optimizer$)$, with both components offering degrees of freedom that can be exploited to shape the theoretical and computational behavior of the optimization. As such, our choice of gradient drift field and Gauss--Newton optimizer provides one example of a more general framework. Exploring both components of the pair may allow one to tailor the drifting method to their needs, be it to obtain convergence guarantees or computational efficiency.

\section*{Acknowledgements}
This research is partly supported by the Bézout Labex, funded by ANR, reference ANR-10-LABX-58.
TL is supported by the ANR project TheATRE, reference ANR-24-CE23-7711.

\addtocontents{toc}{\protect\setcounter{tocdepth}{1}}
\renewcommand*{\bibfont}{\small}
\printbibliography
\newpage

\appendix
\phantomsection
\addcontentsline{toc}{section}{\textbf{Appendix}}

\addtocontents{toc}{\protect\setcounter{tocdepth}{0}}

\section{Notation}

\noindent
Let $d\geq1$.
    We denote by $\PRd$ the set of probability measures on $\bR^d$ {with finite second-order moment}.
    If some $\rho\in\PRd$ is {absolutely continuous} with respect to some $\gamma\in\PRd$, we denote by $\smash{\frac{\dd \rho}{\dd \gamma}}$ the corresponding Radon--Nikodym derivative.
    The {pushforward} $f_*\rho$ of some $\rho\in\PRd$ by some measurable map $f:\bR^d\to\bR^d$ is the probability measure defined on Borel sets $A$ by $f_*\rho(A)\coloneqq \rho(f^{-1}(A))$.
    In this work, $\rhos\in\PRd$ is an absolutely continuous probability measure, and $\Lrhos$ is the Hilbert space of measurable functions $f:\bR^d\to\bR^d$ that are squared-integrable with respect to $\rhos$, endowed with its norm $\smash{\|\cdot\|_{\rhos}}$ and scalar product $\smash{\langle\cdot,\cdot\rangle_{\rhos}}$.

\section{Additional definitions and omitted proofs}

\begin{definition}[Pullback metric]
    \label{def:pullback}
    Let $\Theta$ be a finite-dimensional manifold and $M$ be a (possibly infinite-dimensional with a strong metric~\cite{schmeding2022introduction}) Riemannian manifold with metric $g$. Let $\phi:\Theta\to M$, $\theta\mapsto f_\theta$ be differentiable and whose differential is injective al all $\theta\in\Theta$.
    Then the \emph{pullback metric} of $g$ by $\phi$, which we note $\phi^*g$, is the metric on $\Theta$ defined as
    \begin{equation}
    (\phi^*g)_\theta(\delta\theta,\delta\theta)\coloneqq g_{f_\theta}(d_\theta \phi[\delta\theta],d_\theta \phi[\delta\theta])
    \qquad
    \text{for any $\theta\in\Theta$ and $\delta\theta\in \Tan_\theta\Theta$.}
    \end{equation}
\end{definition}
The following proposition shows that in our case, the pullback of the flat metric on $\Lrhos$ by $\phi$ enjoys a simple expression.
\begin{proposition}[Pullback of the $\Lrhos$ metric]
    \label{app:Gtheta_pullback}
    The pullback metric of the flat $\Lrhos$-metric by the map $\theta\mapsto f_\theta$ is the matrix $G_\theta$ defined by
    \begin{equation}
        G_\theta\coloneqq \int_{\bR^d}(D_\theta f_\theta(x))^\top D_\theta f_\theta(x)\dd\rhos(x)\qquad \text{for all }\theta\in\Theta.
    \end{equation}
\end{proposition}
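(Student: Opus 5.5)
The plan is to compute the pullback metric directly from \cref{def:pullback}, taking $M=\Lrhos$ with its flat metric $g_f(u,v)=\langle u,v\rangle_{\rhos}$, which is the same inner product at every base point $f$. The first step is to identify the differential $d_\theta\phi[\delta\theta]$ of $\phi:\theta\mapsto f_\theta$. It is the function $x\mapsto D_\theta f_\theta(x)\,\delta\theta\in\bR^d$, where $D_\theta f_\theta(x)\in\bR^{d\times m}$ denotes the Jacobian of $\theta\mapsto f_\theta(x)$. This pointwise formula for the Fréchet differential into $\Lrhos$ is the one already used implicitly in \cref{eq:gradient-loss-D,eq:gauss-newton-drift-implicit}, so I take it as part of the standing differentiability assumption on $\theta\mapsto f_\theta$.

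Next, substitute into the definition:
\begin{equation}
(\phi^*g)_\theta(\delta\theta,\delta\theta)=\int_{\bR^d}\big\langle D_\theta f_\theta(x)\delta\theta,\,D_\theta f_\theta(x)\delta\theta\big\rangle\dd\rhos(x)=\int_{\bR^d}\delta\theta^\top (D_\theta f_\theta(x))^\top D_\theta f_\theta(x)\,\delta\theta\dd\rhos(x).
\end{equation}
Since $\delta\theta$ does not depend on $x$, it can be pulled out of the integral, which gives $\delta\theta^\top G_\theta\,\delta\theta$. Polarization, using the symmetry of both the metric and $G_\theta$, then extends this to $(\phi^*g)_\theta(\delta\theta,\delta\theta')=\delta\theta^\top G_\theta\,\delta\theta'$. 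So the metric is represented by the matrix $G_\theta$.

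The only step that needs care is justifying that $G_\theta$ is well defined (finite) and that the exchange above is legitimate. Finiteness follows because $d_\theta\phi[e_i]\in\Lrhos$ for each basis vector $e_i$. Each entry $(G_\theta)_{ij}=\langle d_\theta\phi[e_i],d_\theta\phi[e_j]\rangle_{\rhos}$ is then finite by Cauchy--Schwarz, and linearity of the integral gives the matrix identity. Finally, injectivity of $d_\theta\phi$ makes $G_\theta$ positive definite, so it is indeed a Riemannian metric. I expect no real obstacle beyond the identification of the differential, which is essentially a matter of definition.
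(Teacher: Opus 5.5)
Your proposal is correct and follows the paper's proof. Both compute $(\phi^*g)_\theta(\delta\theta,\delta\theta)=\int_{\bR^d}\|D_\theta f_\theta(x)\,\delta\theta\|^2\dd\rhos(x)=(\delta\theta)^\top G_\theta\,\delta\theta$ directly from \cref{def:pullback}, identifying the differential with the pointwise Jacobian. Your extra remarks on polarization, finiteness of the entries of $G_\theta$, and positive definiteness under injectivity are valid refinements that the paper leaves implicit.
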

\begin{proof}
    Let $\phi:\theta\to f_\theta$ and $g$ be the flat metric on $\Lrhos$.
    By definition of the pullback metric,
    \begin{multline}
        (\phi^*g)_\theta(\delta\theta,\delta\theta)
        =g_{f_\theta}(d_\theta \phi[\delta\theta],d_\theta \phi[\delta\theta])
        =\int_{\bR^d}\|D_\theta f_\theta(x).\delta\theta\|^2\dd\rhos(x)
        \\
        =(\delta\theta)^\top\Big(\int_{\bR^d}(D_\theta f_\theta(x))^\top D_\theta f_\theta(x)\dd\rhos(x)\Big)\delta\theta
        = (\delta\theta)^\top G_\theta \delta\theta,
    \end{multline}
    which ends the proof.
\end{proof}

We believe the following result to be standard but we could not find any textbook reference for it. We therefore provide a short proof here.
\begin{lemma}[Density of increasing subsets]
    \label{lem:Tn}
    Let $K$ be some subset of some Hilbert space $\cH$, and $(K_n)_n$ be an increasing sequence of nonempty subsets of $\cH$ such that $\smash{\overline {\bigcup_n K_n}=K}$. Then for all $f\in K$, there exists a sequence of elements $\hTn\in K_n$ such that $\hTn\to f$.
\end{lemma}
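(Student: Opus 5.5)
Fix $f\in K$. Since $f\in\overline{\bigcup_n K_n}$, for every $j\geq1$ there is some element of $\bigcup_n K_n$ at distance less than $1/j$ from $f$. The plan is to pick these approximants, record the indices at which they first appear, and then use that the $K_n$ are nested to assign to each $n$ the best approximant already available in $K_n$. Nonemptiness of the $K_n$ covers the small $n$ for which no good approximant exists yet.

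\textbf{Step 1: extract approximants with indices.} For each $j\geq1$, choose $h_j\in\bigcup_n K_n$ with $\|h_j-f\|_{\cH}<1/j$, and an index $N_j$ with $h_j\in K_{N_j}$. By replacing $N_j$ with $\max(N_1,\dots,N_j)+j$, we may assume $(N_j)_j$ is strictly increasing and $N_j\to\infty$. Because the $K_n$ are increasing, this still gives $h_j\in K_n$ for all $n\geq N_j$.

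\textbf{Step 2: define $\hTn$.} For $n<N_1$, let $\hTn$ be an arbitrary element of $K_n$, which exists since $K_n\neq\emptyset$. For $n\geq N_1$, let $j(n)\coloneqq\max\{j : N_j\leq n\}$. This maximum is finite because $N_j\to\infty$. Set $\hTn\coloneqq h_{j(n)}$, which lies in $K_{N_{j(n)}}\subset K_n$.

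\textbf{Step 3: convergence.} The function $j(n)$ is nondecreasing in $n$. Given any $J$, we have $j(n)\geq J$ as soon as $n\geq N_J$, so $j(n)\to\infty$. Hence $\|\hTn-f\|_{\cH}<1/j(n)\to0$.

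There is no real obstacle here. The only points needing care are handling the finitely many initial indices, which is where nonemptiness of the $K_n$ is used, and making $j(n)$ well defined by forcing $(N_j)_j$ to be increasing. Neither the Hilbert structure nor the convexity of $K$ is used: the argument works in any metric space.
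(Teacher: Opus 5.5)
Your proof is correct. It shares the paper's skeleton: approximants of $f$ are taken from $\bigcup_m K_m$, the indices where they appear are made strictly increasing, the gaps are filled by a piecewise constant sequence, and nonemptiness of the $K_n$ covers the initial terms. What differs is how the indices are forced to increase. The paper splits into the cases ``$f\in K_n$ for some $n$'' or not, and in the second case uses $d_n=\operatorname{dist}(f,K_n)$ as an adaptive tolerance: a new approximant strictly closer to $f$ than $d_{p_{k-1}}$ cannot lie in $K_{p_{k-1}}$. This needs $d_n>0$, which actually requires $f\notin\overline{K_n}$ rather than merely $f\notin K_n$. That is a real distinction here, since the $K_n=\phi_n(\Theta_n)$ are typically not closed, so the paper's case split needs a small repair. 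You instead use fixed tolerances $1/j$ and simply enlarge the indices, $N_j\mapsto\max(N_1,\dots,N_j)+j$, which works precisely because the $K_n$ are nested; this removes both the case distinction and the positivity issue. If one does want to use $d_n$, it gives a shorter argument than either proof. Since $d_n$ is nonincreasing with $\inf_n d_n=\operatorname{dist}(f,\bigcup_m K_m)=0$, any choice $\hTn\in K_n$ with $\|\hTn-f\|<d_n+1/(n+1)$ works, with no gap-filling and with the error tied to the best approximation error in $K_n$ itself.
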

\begin{proof}
    If there exists some $n\geq0$ such that $f\in K_n$, then it is sufficient to take arbitrary elements $(f_0,f_1,\dots, f_{n-1})$ of $K_0\times K_1\times \dots\times K_{n-1}$ and let $f_k\coloneqq f$ for all $k\geq n$ to get such a desired sequence.
    If $f$ does not belong to any $K_n$, let $d_n\coloneqq \operatorname{dist}(f,K_n)>0$. It is positive and decreasing, hence it converges to some nonnegative real number. This number is necessarily zero, else this would contradict the density assumption.
    Let us now construct the desired sequence, starting at some arbitrary $g_0\in K_0$.
    By density, there exists some element $g_1\in \bigcup_m K_m$ such that $\|f-g_1\|<d_0$. This implies that $g_1$ does not belong to $K_0$, and since the $K_n$ are increasing, $g_1\in K_{p_1}$ for some $p_1\geq 1$. Repeating this process with $d_{p_1}$ yields the existence of some $g_2\in K_{p_2}$ such that $\|f-g_2\|\leq d_{p_1}$, where $p_2\geq p_1+1$. Iterating, one gets the existence of a sequence of $g_k\in g_{p_k}$. To account for the fact that the sequence $p_k$ may not go through all nonnegative integers, we fill in the gaps and create a piecewise constant sequence: for $n\geq0$, define $\hTn\coloneqq g_k$ whenever $p_k\leq n <p_{k+1}$, where $p_0\coloneqq 0$. This ensures that for all $n\geq0$, $\hTn\in K_{p_k}\subset K_n$. Finally, since $p_k\to \infty$ when $k\to\infty$, one has $d_{p_{k}}\to 0$ and therefore $\|f-\hTn\|\to0$ as $n\to\infty$.
\end{proof}

\end{document}